\declaretheorem[]{theorem}
\declaretheorem[]{definition}
\declaretheorem[]{corollary}
\declaretheorem[]{remark}
\newcommand\dist{\mathrel{\overset{\makebox[0pt]{\mbox{\normalfont\tiny\sffamily d}}}{=}}}
\begin{document}

\title{Exponential Convergence Rates for Stochastically Ordered Markov Processes with Random Initial Conditions} 
\author{Julia Gaudio, Saurabh Amin, and Patrick Jaillet}


\maketitle
\begin{abstract}
In this brief paper we find computable exponential convergence rates for a large class of stochastically ordered Markov processes. We extend the result of Lund, Meyn, and Tweedie (1996), who found exponential convergence rates for stochastically ordered Markov processes starting from a fixed initial state, by allowing for a random initial condition that is also stochastically ordered. Our bounds are formulated in terms of moment-generating functions of hitting times. To illustrate our result, we find an explicit exponential convergence rate for an M/M/1 queue beginning in equilibrium and then experiencing a change in its arrival or departure rates, a setting which has not been studied to our knowledge. 
\end{abstract}


\section{Introduction}
This paper is concerned with parametrized stochastically ordered Markov processes. Consider, for example, a stable M/M/1 queue with service rate $\mu$ and arrival rate $\lambda < \mu$. For a fixed $\mu$, let $\{X_t(\pi, \lambda)\}_{t \geq 0}$ be the queue-length process with service rate $\lambda$ and initial distribution $\pi$. Then $X_t(\pi, \lambda)$ is stochastically increasing in $\lambda$, for all $t \geq 0$. That is, 
$$\mathbb{P}\left(X_t(\pi, \lambda)\geq x \right) \leq \mathbb{P}\left(X_t(\pi, \lambda') \geq x \right)$$ 
for all $x \in \mathbb{Z}_+$ if $\lambda \leq \lambda'$. Similarly, $X_t(\pi, \mu)$ is stochastically decreasing in $\mu$ for fixed $\lambda$ and $\pi$. The focus of our paper is to analyze the convergence of a parametrized stochastically ordered Markov process to its stationary distribution, when its initial state is distributed according to a stationary distribution for another parameter choice. This will be stated more precisely below.

The Markov process is described by its transition kernel and its initial distribution. We assume that the initial distribution is the stationary distribution associated with setting the parameter equal to $r_0$, and we let $r$ be the parameter setting of the transition kernel. The parameter change happens once, at $t = 0$. In other words, if $r=r_0$, the process is always in equilibrium, and when $r \neq r_0$, the system starts in the equilibrium associated with $r_0$ and transitions over time to the one associated with $r$. The equilibrium distributions are denoted by $\pi(r_0)$ and $\pi(r)$. When $r \neq r_0$ we say the system is ``perturbed.'' These Markov processes will be denoted by $X_t\left(r_0, r\right)$. We sometimes refer to the collection $\{X_t\left(r_0, r \right)\}_{r_0,r}$ as a ``system.'' Note that there could be multiple parameters. For example, to study an M/M/1 queue starting in the stationary distribution associated with $(\lambda, \mu)$, operating under parameters $(\lambda', \mu')$, we would have $r_0 = (\lambda, \mu)$ and $r = (\lambda', \mu')$. As in \cite{Lund1996}, we consider Markov processes that take value in $[0, \infty)$. In this paper, we consider the total variation distance between a parametrized continuous time Markov process and its stationary distribution. Recall the definition of total variation distance:

\begin{definition}
The total variation distance between two measure $P$ and $Q$ on state space $\Omega$ is given by $$\left \Vert P - Q \right \Vert_{\text{TV}} = \sup_{A \subset \Omega} \left |P(A) - Q(A) \right|.$$
\end{definition}
We seek a convergence bound of the form $$\left \Vert \mathcal{L} \left(X_t(r_0, r)\right)- \pi(r) \right \Vert_{\text{TV}} \leq C e^{-\alpha t}.$$ The value $\alpha$ is referred to as the ``convergence rate.''

Prior work in the area of the convergence of continuous-time Markov processes focuses on convergence assuming a particular deterministic initial state. However, this type of analysis is limiting, because the initial state of a process is often unknown. In situations where the initial state is unobservable, it may be more reasonable to assume a particular initial distribution rather than a particular initial state. Our extension of the result by \cite{Lund1996} allows one to analyze a system in equilibrium that undergoes a perturbation of its parameters, pushing it towards another equilibrium. For example, one might wish to analyze the effect of a disruption on a queue of customers waiting for service. The bounds in this paper would allow one to study how quickly the queue length process reaches the new equilibrium after being perturbed.

We start by reviewing the existing literature on the convergence of stochastically ordered Markov processes, focusing on a paper by Lund, Meyn and Tweedie (\cite{Lund1996}). We extend the result of \cite{Lund1996}, allowing the initial state of the system to be distributed according to a stationary distribution from the family of distributions parametrized by the system parameters. To illustrate the value of our result, we apply it to the analysis of perturbed M/M/1 queues. More importantly, our result applies to a broader class of Markov processes, namely any parametrized Markov process whose initial distribution is a stationary distribution.

\section{Related work}
Lund, Meyn, and Tweedie (\cite{Lund1996}) establish convergence rates for nonnegative Markov processes that are stochastically ordered in their initial state, starting from a fixed initial state. Examples of such Markov processes include: M/G/1 queues, birth-and-death processes, storage processes, insurance risk processes, and reflected diffusions. We reproduce here the main theorem, Theorem 2.1 from \cite{Lund1996}, which will be extended in this paper. 

\begin{theorem}(\cite{Lund1996})\label{LundThm2.1}
Suppose that $\{X_t \}$ is a Markov process on $\Omega = [0, \infty)$ that is stochastically increasing in its initial state, with parameter setting $r$. Let $\tau_0(x)$ be the hitting time to zero of $X_t$ given that $X_0 = x$, and let $\tau_0(\pi(r))$ be the hitting time to zero of $X_t$ given that $X_0$ is distributed according to the stationary distribution $\pi(r)$.

Let $\mathcal{L} \left(X_t(x,r)\right)$ be the law of $X_t$ given that $X_0 = x$. If $\mathbb{E} \left[ e^{\alpha \tau_0(x)} \right] < \infty$ for some $\alpha >0$ and some $x > 0$, then 
\begin{align}
\left \Vert \mathcal{L} \left(X_t(x, r) \right) - \pi(r) \right \Vert_{\text{TV}} &\leq \left(\mathbb{E} \left[ e^{\alpha \tau_0(x)} \right]  + \mathbb{E} \left[ e^{\alpha \tau_0(\pi(r))} \right] \right) e^{-\alpha t} \label{LundThm}
\end{align}
for every $x \geq 0$ and $t \geq 0$.
\end{theorem}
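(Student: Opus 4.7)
The plan is to prove Theorem \ref{LundThm2.1} via the standard coupling argument for stochastically monotone Markov processes: I would reduce the total variation distance to the tail of a hitting time and then apply a Markov-type inequality. The key intermediate object is the process started deterministically from $0$, which is dominated pathwise by every other initial condition since the state space is $[0,\infty)$.

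First, I would invoke stochastic monotonicity to build, for any two initial conditions $\mu_1 \stleq \mu_2$, a coupling of the corresponding processes under which the ordering is preserved path by path. Concretely, one produces $(X_t^{(1)}, X_t^{(2)})$ with $X_t^{(1)} \leq X_t^{(2)}$ for all $t \geq 0$, and then modifies the coupling after the first meeting time so that the two copies evolve identically thereafter; this modification does not change the marginals. In the specific coupling of the process started at $0$ with the process started at $x$, the monotone bound forces $X_t^{(1)} = 0$ whenever $X_t^{(2)} = 0$, so the coupling time is at most $\tau_0(x)$. The same reasoning, applied to a copy started at $0$ and a copy started from $\pi(r)$, gives a coupling time bounded by $\tau_0(\pi(r))$.

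Next, I would apply the triangle inequality
$$\Vert \mathcal{L}(X_t(x,r)) - \pi(r) \Vert_{\text{TV}} \leq \Vert \mathcal{L}(X_t(x,r)) - \mathcal{L}(X_t(0,r)) \Vert_{\text{TV}} + \Vert \mathcal{L}(X_t(0,r)) - \pi(r) \Vert_{\text{TV}}.$$
Since total variation distance between two laws is bounded by the probability of non-coincidence under any coupling, the first term is at most $\mathbb{P}(\tau_0(x) > t)$ and the second is at most $\mathbb{P}(\tau_0(\pi(r)) > t)$; the latter uses that $\pi(r)$ is stationary, so a process initialized from $\pi(r)$ has law $\pi(r)$ at every time. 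A Markov inequality then yields $\mathbb{P}(\tau_0(y) > t) \leq e^{-\alpha t} \mathbb{E}[e^{\alpha \tau_0(y)}]$, and summing the two contributions delivers the bound \eqref{LundThm}.

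The main technical obstacle is the construction of the monotone coupling itself. In continuous time this is not automatic: it typically requires either Strassen's theorem applied to the finite-dimensional distributions and then lifted to path space, or an explicit construction in terms of common random drivers when the process admits a stochastic recursion. I would rely on the existence result from the stochastic monotonicity literature rather than reprove it, since under the hypotheses of the theorem this coupling is well established. A secondary wrinkle is verifying that the coupling time really equals $\tau_0(x)$ in the sense needed here, which amounts to checking that once the upper process has hit $0$ both processes can be continued jointly with a common trajectory; this is a routine application of the strong Markov property at $\tau_0(x)$.
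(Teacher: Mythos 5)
Your proof is correct, and it is a legitimate coupling argument, but the decomposition is different from the one the paper uses for its own (analogous) result, Theorem \ref{extension2.1}. There, the two processes being coupled, $X_t(r_0,r)$ and $X_t(r,r)$, have initial distributions $\pi(r_0)$ and $\pi(r)$ that are themselves stochastically ordered, so a \emph{single} bounding process $X_t(r_m,r)$ dominates both coupled copies and the coupling inequality plus Markov's inequality yield a one-term bound $G(r_m,r,\alpha)e^{-\alpha t}$ with no triangle inequality. In the setting of Theorem \ref{LundThm2.1} the two initial conditions are $\delta_x$ and $\pi(r)$, which are not comparable, and you resolve this by inserting the process started at $0$ (which is dominated by everything on $[0,\infty)$) and splitting via the triangle inequality; each of the two resulting terms is then controlled by a monotone coupling whose coupling time is bounded by the hitting time to $0$ of the dominating copy, giving exactly the two-term bound \eqref{LundThm}. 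The alternative (which is closer to the original Lund--Meyn--Tweedie argument) is to couple $X_t(x,r)$ and $X_t(\pi(r),r)$ directly and condition on whether $X_0^* \leq x$ or $X_0^* > x$, so that the coupling time is at most $\max\{\tau_0(x), \tau_0(\pi(r))\}$ and a union bound produces the same sum; your route buys a cleaner statement at the cost of an extra auxiliary process, while the direct route avoids the triangle inequality. Your reliance on the Kamae--Krengel--O'Brien / Strassen machinery for the existence of the order-preserving coupling in continuous time matches what the paper itself does, so that is not a gap.
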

The significance of this theorem is that it provides computable rates of convergence for a large class of Markov processes by relating the total variation distance from equilibrium to moment generating functions of hitting times to zero. We extend this result to the situation where $X_0$ is distributed according to the stationary distribution corresponding to a different parameter choice. The proof is analogous to the one given in \cite{Lund1996} and is based on a coupling approach.

The second major result in \cite{Lund1996} is to connect a drift condition to the convergence rate in \eqref{LundThm}, which is Theorem 2.2 (i) in \cite{Lund1996}, reproduced below:
\begin{theorem}(\cite{Lund1996})\label{LundThm2.2}
Suppose that $\{X_t\}$ is a Markov process that is stochastically increasing in its initial state. Let $\mathcal{A}$ be the extended generator of the process. If there exists a drift function $V : \Omega \rightarrow [1, \infty)$ and constants $c > 0$ and $b < \infty$ such that for all $x \in \Omega$
\begin{align}
\mathcal{A} V(x) &\leq -cV(x) + b \mathbbm{1}_{\{0\}}(x) \label{drift}
\end{align}
then $\mathbb{E} \left[ e^{c \tau_0(x)} \right] < \infty$ for all $x >0$, which implies that \eqref{LundThm} holds for $\alpha \leq c$.
\end{theorem}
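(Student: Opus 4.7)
\emph{Proof plan.} The strategy is to construct a nonnegative supermartingale out of $V$ and the exponential multiplier $e^{ct}$, and then read off the exponential moment of $\tau_0(x)$ by optional stopping. Concretely, I would set $M_t := e^{ct} V(X_t)$ with $X_0 = x > 0$ and argue that $M_{t\wedge \tau_0(x)}$ is a supermartingale. The key algebraic observation is that the drift condition \eqref{drift} is tailored precisely so that off the origin, the contribution of $\mathcal{A}V$ cancels the exponential growth introduced by $e^{ct}$.

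To justify this, I would invoke the defining property of the extended generator: $V(X_t) - V(X_0) - \int_0^t \mathcal{A}V(X_s)\,ds$ is a local martingale. Combining this with the deterministic exponential factor via Itô's product rule (or the analogous computation at the level of $\mathcal{A}$) yields
\begin{equation*}
M_t = V(x) + \int_0^t e^{cs}\bigl(cV(X_s) + \mathcal{A}V(X_s)\bigr)\,ds + N_t,
\end{equation*}
where $N_t$ is a local martingale. By \eqref{drift}, the bracketed integrand is bounded pointwise by $b e^{cs}\mathbbm{1}_{\{0\}}(X_s)$. On the stochastic interval $[0,\tau_0(x))$ the process avoids the origin, so this indicator vanishes Lebesgue-a.e., which makes $M_{t\wedge\tau_0(x)}$ a nonnegative local supermartingale, and hence, after a standard localization-plus-Fatou argument using $M\geq 0$, a genuine supermartingale satisfying $\mathbb{E}[M_{t\wedge\tau_0(x)}] \leq V(x)$.

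Letting $t \to \infty$ and applying Fatou once more, I would observe that on $\{\tau_0(x) = \infty\}$ the limit inferior of $M_{t\wedge\tau_0(x)}$ is at least $\liminf_t e^{ct} = \infty$ (since $V\geq 1$), so this event is null; and on $\{\tau_0(x) < \infty\}$ the limit inferior equals $e^{c\tau_0(x)}V(0) \geq e^{c\tau_0(x)}$. Therefore $\mathbb{E}[e^{c\tau_0(x)}] \leq V(x) < \infty$, which is the main conclusion. Plugging this into Theorem \ref{LundThm2.1} then yields \eqref{LundThm} with any $\alpha \leq c$, once one also checks $\mathbb{E}[e^{c\tau_0(\pi(r))}] < \infty$; but this follows by averaging the above bound against $\pi(r)$ and using the standard fact that \eqref{drift} forces $\int V\,d\pi(r) \leq b/c$ (obtained by integrating $\mathcal{A}V$ against the stationary measure).

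The main obstacle I anticipate is not the computation itself but the rigor of the product-rule / supermartingale step at the level of the extended generator: one must verify that $V$ (together with the time-dependent multiplier $e^{ct}$) sits in the appropriate domain and that the passage from local to true supermartingale is legitimate under the weak regularity typical of general Markov processes on $[0,\infty)$. A secondary subtlety is handling possible jumps of $X$ across the origin at time $\tau_0(x)$, but since $V \geq 1$ everywhere the estimate $\mathbb{E}[e^{c\tau_0(x)}] \leq V(x)$ is robust to the exact value of $V(0)$.
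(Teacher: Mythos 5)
This theorem is quoted from Lund, Meyn, and Tweedie (1996) and the paper offers no proof of its own, so there is nothing internal to compare against. Your supermartingale argument --- forming $e^{ct}V(X_{t\wedge\tau_0})$ via the extended-generator identity, noting the drift term is nonpositive off the origin, and extracting $\mathbb{E}[e^{c\tau_0(x)}]\leq V(x)$ by Fatou --- is exactly the standard route taken in the cited source (and in the Meyn--Tweedie continuous-time stability papers it relies on), and it is essentially correct, including your closing remark that the stationary-start moment $\mathbb{E}[e^{c\tau_0(\pi(r))}]$ must be checked separately; that last point is precisely what Lemma 3.1 of Lund et al.\ supplies and what the present paper invokes in Corollary \ref{cordrift}.
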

\noindent We also connect Theorem \ref{LundThm2.2} to our extension of Theorem \ref{LundThm2.1}. 

Theorem \ref{LundThm2.1} is applied to several univariate systems in \cite{Lund1996}: finite capacity stores, dam processes, diffusion models, periodic queues, and M/M/1 queues. Additionally, one multivariate system is considered in \cite{Lund1996}: two M/M/1 queues in series. 

The paper by Lund et al (1996) has inspired numerous related papers, some of which we reference here. Several directly apply the main results; for example Novak and Watson (2009) used Theorem \ref{LundThm2.1} to derive the convergence rate of an M/D/1 queue, \nocite{Novak2009} while \cite{Tweedie1998} applied Theorem \ref{LundThm2.2} to establish the convergence of the northwest truncation (square submatrix that includes the top left entry) of transition probability matrices. In a more applied work, Kiessler (2008) used the result of \cite{Lund1996} to prove the convergence of an estimator for traffic intensity. \nocite{Kiessler2008}

Other works build on the derivation of more general convergence bounds. For example, Liu et al (2007) applied the main theorem in order to get bounds on the best uniform convergence rate for strongly ergodic Markov chains. \nocite{Liu2007} Drawing on the stochastic monotonicity and coupling approach of Lund et al (1996), Roberts and Rosenthal (2017) derived convergence rate bounds for symmetric Langevin diffusions, while Sarantsev (2016) followed the Lyapunov function approach to find convergence rates for jump diffusions on the half-line. \nocite{Roberts2017, Sarantsev2016} Hou et al (2005) also used a coupling method, focusing on establishing subgeometric convergence rates. \nocite{Hou2005} In related work to \cite{Hou2005}, Liu et al (2010) established subgeometric convergence rates via first hitting times and drift functions. \nocite{Liu2010} Douc et al (2004) were able to generalize convergence bounds to time-inhomogeneous chains using coupling and drift conditions. \nocite{Douc2004}

Roberts and Tweedie (2000) found convergence bounds for stochastically ordered Markov processes, allowing for no minimal reachable element. \nocite{Roberts2000} Their work includes a convergence bound for a Markov process that starts in a random initial condition, and therefore has a similar purpose as this paper. However, their bound is stated in terms of a drift condition, which may be challenging to verify because it requires finding a drift function. On the other hand, our bound relies on a direct calculation of moment generating functions of hitting times. Rosenthal (2002) also derives a convergence bound for an initial distribution for more general chains, using drift and minorization conditions, via a coupling approach. \nocite{Rosenthal2002} Baxendale (2005) derives convergence bounds for geometrically ergodic Markov processes with an alternate approach to \cite{Lund1996}, though also using a drift condition. \nocite{Baxendale2005} Scott and Tweedie (1996), and Douc et al (2005) consider convergence in $f$-norm: Scott and Tweedie (1996) find geometric and subgeometric convergence rates, while Douc et al (2007) find convergence rates for subgeometrically ergodic Harris-recurrent Markov chains, allowing for no minimal atom. \nocite{Scott1996, Douc2007} Additionally, the drift condition in \cite{Lund1996} has appeared numerous times in literature on controlled Markov chains and Markov Decision Processes (e.g. \cite{Prieto-Rumeau2012}, \cite{Guo2007}, \cite{Guo2010}).

\section{Main result}
We begin with some definitions that we utilize in this paper. We let $\{X_t(r_0, r) \}$ denote the process governed by $r$ with initial distribution corresponding to $r_0$. Similarly, we let $\{X_t(r) |X_0(r) = x \}$ denote the process governed by $r$ with initial state $x$.

\begin{definition}
A set $A$ is said to be \emph{increasing} if
$$\forall x \in A, y \geq x \implies y \in A.$$
\end{definition}

\begin{definition}\label{stochasticallyincreasing}
For a family of nonnegative Markov processes $\{X_t(r_0, r) \}$ with transition kernel parametrized by $r$ with starting stationary distribution parametrized by $r_0$, we say that $X_t$ is \emph{stochastically increasing in $r_0$} if for all $t \geq 0$ and all increasing sets $A \subset \Omega$, 
$$\mathbb{P} \left(X_t(r_0, r) \in A\right) \leq \mathbb{P} \left(X_t(r_0', r)  \in A\right)$$
whenever $r_0 \leq r_0'$.  
Note that for a univariate process, $A$ is of the form $\{y  \in \Omega : y \geq x \}$ for some $x$.
\end{definition}

\begin{definition}
Define $\tau_0(r_0, r)$ to be the hitting time to the zero state of $\{X_t(r_0, r) \}$. Similarly, define $\tau_0(x, r)$ to be the hitting time to the zero state of $\{X_t(r) | X_0(r) = x \}$ For a Markov process $\{ X_t(r_0, r)\}$, let 
$$G(r_0, r, \alpha) = \mathbb{E} \left[e^{\alpha \tau_0(r_0, r)} \right]$$
and similarly for a Markov process $\{X_t(r) |X_0(r) = x \}$, define $G(x, r, \alpha) = \mathbb{E} \left[e^{\alpha \tau_0(x, r)} \right].$
\end{definition}

We now extend Theorem \ref{LundThm2.1} to allow for a random initial condition. 

\begin{theorem}\label{extension2.1}
Consider a family of nonnegative Markov processes $\{X_t(r_0, r) \}$ that is stochastically increasing in $r$, where $r = r_0$ corresponds to the system being in equilibrium. 
Let $r_m = \max\{r_0, r\}$. If $G(r_m, r, \alpha) < \infty$ for some $\alpha > 0$, then
\begin{align}
\left \Vert \mathcal{L}\left(X_t(r_0, r) \right) - \pi(r) \right \Vert_{\text{TV}} &\leq G(r_m, r, \alpha)  e^{-\alpha t} \label{extension2.1eq}
\end{align}
\end{theorem}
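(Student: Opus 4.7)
My plan is to mimic the coupling argument behind Theorem \ref{LundThm2.1}, but to replace the two-process coupling (one started from $x$, one from $\pi(r)$) with a three-process monotone coupling in which a common stochastically largest process dominates both the process of interest and the stationary process. Concretely, let $r_m = \max\{r_0, r\}$ and construct, on a single probability space, three versions of the $r$-dynamics:
\begin{align*}
Y_t &\steq X_t(r_0, r), \\
Y'_t &\steq X_t(r, r) \sim \pi(r), \\
Z_t &\steq X_t(r_m, r).
\end{align*}
Using the stochastic monotonicity hypothesis, I would first couple the initial conditions so that $Y_0 \leq Z_0$ and $Y'_0 \leq Z_0$ almost surely (possible because $\pi(r_0) \preceq \pi(r_m)$ and $\pi(r) \preceq \pi(r_m)$, which follow from the assumed monotonicity applied at $r_0 = r$). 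I would then propagate the coupling in time via the standard monotone coupling of the transition kernel, preserving $Y_t \leq Z_t$ and $Y'_t \leq Z_t$ for all $t \geq 0$.

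The key observation is that since the state space is $[0, \infty)$, the inequality $Y_t \leq Z_t$ forces $Y_t = 0$ at any time $Z_t = 0$, and similarly $Y'_t = 0$; from that moment onward I merge the three paths and let them evolve as a single copy of the process. Hence the first time $Y_t$ and $Y'_t$ agree is at most $\tau_0(r_m, r)$, the hitting time of $Z_t$ to zero. The standard coupling inequality then gives
\begin{align*}
\left\Vert \mathcal{L}(X_t(r_0, r)) - \pi(r) \right\Vert_{\text{TV}} \leq \mathbb{P}(Y_t \neq Y'_t) \leq \mathbb{P}(\tau_0(r_m, r) > t),
\end{align*}
and a Markov's inequality on $e^{\alpha \tau_0(r_m, r)}$ yields the bound $G(r_m, r, \alpha) e^{-\alpha t}$.

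The main obstacle I anticipate is the initial-condition step: unlike Theorem \ref{LundThm2.1}, where one endpoint is a deterministic state $x$ and the other is a random draw from $\pi(r)$ (so one easily dominates the other via the point mass at the maximum), here both $Y_0$ and $Y'_0$ are random, so the monotone coupling of $(\pi(r_0), \pi(r), \pi(r_m))$ must be justified. The natural route is to apply Strassen's theorem on $[0,\infty)$ to produce a joint law with the required almost-sure orderings, which requires verifying that $\pi(r_0)$ and $\pi(r)$ are each stochastically dominated by $\pi(r_m)$; this in turn is a clean consequence of the hypothesis that the family is stochastically increasing in its parameter, since taking $t \to \infty$ in the monotonicity of $X_t(r',r')$ in $r'$ passes stochastic order to the stationary limits. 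Once the coupled initial condition is in hand, the rest of the argument is a direct adaptation of the proof in \cite{Lund1996}.
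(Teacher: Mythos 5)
Your proposal is correct and follows essentially the same route as the paper: a monotone (Strassen/Kamae--Krengel--O'Brien) coupling dominated by the process started from $\pi(r_m)$, coalescence of all copies when the dominating process hits $0$, then the coupling inequality and Markov's inequality applied to $e^{\alpha \tau_0(r_m,r)}$. The only cosmetic difference is that you frame it as a three-process coupling, whereas the paper observes that since $r_m \in \{r_0, r\}$ the bounding process coincides in law with one of the two coupled processes, so a two-process ordered coupling suffices.
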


\begin{proof}
Note that $X_t(r,r) \dist \pi(r)$. Using the coupling inequality, we have 
\begin{align*}
\left \Vert \mathcal{L}\left(X_t(r_0, r) \right) - \pi(r) \right \Vert_{\text{TV}} &\leq \mathbb{P}\left(X_t(r_0, r) \neq X_t(r,r) \right)
\end{align*}
where $\left(X_t(r_0, r),X_t(r,r)  \right)$ is any coupling.\\
Either $\{X_t(r_m, r) \} \dist \{X_t(r_0, r) \}$ or $\{X_t(r_m, r) \} \dist \{X_t(r, r) \}$. We can create copies $X_t(r_0, r)'$, $X_t(r, r)'$ so that  $X_t(r_m, r) = X_t(r_0, r)' \geq X_t(r, r)'$ if $r_m = r_0$, and  $X_t(r_m, r) = X_t(r, r)' \geq X_t(r_0, r)'$ if $r_m = r$. This is possible by an extension of Strassen's Theorem to stochastic processes, developed in \cite{Kamae1977} and as cited by \cite{Lund1996}. We take $\left(X_t(r_0, r)',X_t(r,r)'  \right)$ as the coupling. Then, the process $X_t(r_m, r)$ acts as a bounding process. Observe that 
$$\left\{X_t(r_m, r) = 0\right\} \implies \left\{X_t(r_0, r) = X_t(r, r) = 0 \right\}$$
and the coupling occurs at or before time $t$. So then we have
$$ \mathbb{P}\left(X_t(r_0, r) \neq X_t(r,r) \right) \leq \mathbb{P} \left(\tau_0\left(r_m, r \right) > t \right).$$
 Exponentiating and using Markov's inequality, we obtain the desired result:
\begin{align*}
\left \Vert \mathcal{L}\left(X_t(r_0, r) \right) - \pi(r) \right \Vert_{\text{TV}} &\leq \mathbb{P} \left(e^{\alpha \tau_0(r_m, r)} > e^{\alpha t} \right) \text{ for $\alpha > 0$}\\
&\leq \mathbb{E} \left[e^{\alpha \tau_0(r_m, r)} \right] e^{-\alpha t}\\
&= G(r_m, r, \alpha) e^{-\alpha t}
\end{align*}
\end{proof}
However, the challenge in applying Theorem \ref{extension2.1} is finding $\alpha >0$ for which $G(r_m, r, \alpha)$ is finite. Note that $G(r_m, r, \alpha) $ is a moment generating function (MGF), so $\{\alpha : G(r_m, r, \alpha) < \infty\}$ is an interval containing zero, typically referred to as the \emph{domain}. For some Markov processes, the domain is precisely known. One such example is the M/M/1 queue with fixed service rate, where the arrival rate is perturbed from $r_0 = \lambda_0$ to $r = \lambda$. For processes where the domain is difficult to find but $r_m = r$, we can apply Theorem \ref{LundThm2.2}.

\begin{corollary}\label{cordrift}
If $r_m = r$ and the drift condition (\ref{drift}) holds for a Markov process $X_t(x, r)$ with some $V(\cdot), b, c$, then (\ref{extension2.1eq}) holds with $\alpha = c$.
\end{corollary}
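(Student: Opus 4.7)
The plan is to verify the single hypothesis of Theorem \ref{extension2.1} with $\alpha = c$, namely that $G(r_m, r, c) < \infty$, and then invoke that theorem directly. The key observation is that when $r_m = r$, the bounding process $X_t(r_m, r)$ used in the proof of Theorem \ref{extension2.1} is just $X_t(r, r)$, which sits in $\pi(r)$ for all time. Consequently, $\tau_0(r_m, r)$ is distributed as the hitting time starting from the stationary law $\pi(r)$, i.e.\ as $\tau_0(\pi(r))$ in the notation of Theorem \ref{LundThm2.1}, and $G(r_m, r, c) = \mathbb{E}[e^{c \tau_0(\pi(r))}]$.

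To show this quantity is finite, I would invoke Theorem \ref{LundThm2.2}. That theorem asserts that the drift condition (\ref{drift}) implies \eqref{LundThm} with $\alpha \leq c$; since $\mathbb{E}[e^{c \tau_0(\pi(r))}]$ appears explicitly on the right-hand side of \eqref{LundThm}, its finiteness is part of the conclusion, giving exactly $G(r_m, r, c) < \infty$. If one prefers a self-contained derivation, the drift condition yields $\mathbb{E}_x[e^{c\tau_0}] \leq V(x)$ by a standard Dynkin-type argument applied to $e^{ct} V(X_t)$, and integrating (\ref{drift}) against the invariant measure $\pi(r)$ (so that the generator term vanishes) gives $\int V \, d\pi(r) \leq b/c$, hence $G(r_m, r, c) \leq b/c$.

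With $G(r_m, r, c) < \infty$ established, applying Theorem \ref{extension2.1} with $\alpha = c$ immediately yields \eqref{extension2.1eq} with $\alpha = c$. The main---and rather minor---obstacle is the subtle point that the drift-condition MGF bound must be lifted from deterministic initial states $x > 0$ to the random initial distribution $\pi(r)$; once this is settled, the corollary reduces to a one-line composition of the two preceding theorems, and the hypothesis $r_m = r$ plays no further role than ensuring the bounding process starts in the relevant stationary law.
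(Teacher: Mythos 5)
Your proposal is correct and follows essentially the same route as the paper: establish $G(r_m,r,c)<\infty$ via the drift condition and then invoke Theorem \ref{extension2.1}. The paper handles the one nontrivial step---lifting the MGF bound from fixed initial states to the stationary initial law $\pi(r)$---by citing Lemma 3.1 of Lund, Meyn, and Tweedie (1996), and your self-contained derivation ($\mathbb{E}_x[e^{c\tau_0}]\leq V(x)$ together with $\int V\,d\pi(r)\leq b/c$) is precisely the content of that lemma, so the two arguments coincide.
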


\begin{proof}
If the drift condition holds then $G(x, r, \alpha) < \infty$, by Theorem \ref{LundThm2.2}. Applying Lemma 3.1 from \cite{Lund1996}, we also have that $G(r, r, c) < \infty$.
\end{proof}

We now apply Theorem \ref{extension2.1} to the analysis of a single M/M/1 queue.
\subsection{M/M/1 queues}
\subsubsection{Queue length process.}
We study the queue length process and consider perturbing the arrival and service rates from $(\lambda_0, \mu_0)$ to $(\lambda, \mu)$. Throughout, we assume the stability conditions $\lambda_0 < \mu_0$ and $\lambda < \mu$. First we consider the case of changing the arrival rate while keeping the service rate fixed. We then show how to find bounds for any change of the two parameters, as long as the service rate is greater than the arrival rate.

Supposing that $\mu = \mu_0$, we can suppress the service rate and write the two processes as $X_t(\lambda_0, \lambda)$ and $X_t(\lambda, \lambda)$.
Let $\lambda_m = \max \{\lambda_0, \lambda \}$. From Theorem \ref{extension2.1}, we have
\begin{align}
\left \Vert \mathcal{L}\left(X_t(\lambda_0, \lambda)\right) - \pi(\lambda) \right \Vert_{\text{TV}} &\leq G(\lambda_m, \lambda, \alpha) e^{-\alpha t} \label{applythm}
\end{align}
Let us analytically compute $G(\lambda_m, \lambda, \alpha)$. Let $\tau_y(x, \lambda)$ be the hitting time to $y$ of the M/M/1 queue with parameters set to $(\lambda, \mu)$, started from a queue length of $x$, and write $$G(x, \lambda, \alpha) = \mathbb{E} \left[e^{\alpha \tau_0(x)} \right].$$ 
Then by conditioning on the initial state, we obtain
\begin{align*}
G(\lambda_m, \lambda, \alpha) &= \mathbb{E} \left[ e^{\alpha \tau_0(\lambda_m, \lambda)} \right]\\
&= \sum_{x = 0}^{\infty} \left(1 - \frac{\lambda_m}{\mu} \right) \left( \frac{\lambda_m}{\mu} \right)^x G(x, \lambda, \alpha)
\end{align*}
Now by decomposing the hitting time and noting the independence and stationarity of the incremental hitting times,
\begin{align*}
G(x, \lambda, \alpha) &= \mathbb{E} \left[e^{\alpha \tau_0(x, \lambda)} \right]\\
&=  \mathbb{E} \left[\prod_{i = 1}^x e^{\alpha  \tau_{x-i}(x - i + 1, \lambda)} \right]\\
& = \prod_{i = 1}^x \mathbb{E} \left[e^{\alpha  \tau_{x-i}(x - i + 1, \lambda)} \right]\\
& = \prod_{i = 1}^x \mathbb{E} \left[e^{\alpha  \tau_{0}(1, \lambda)} \right]\\
&= \left(G(1, \lambda, \alpha)\right)^x
\end{align*}

Therefore 
\begin{align}
G(\lambda_m, \lambda, \alpha) &= \sum_{x = 0}^{\infty} \left(1 - \frac{\lambda_m}{\mu} \right) \left( \frac{\lambda_m}{\mu} \right)^x \left(G(1, \lambda, \alpha)\right)^x\nonumber\\
&=  \frac{1 - \frac{\lambda_m}{\mu}}{1 - \frac{\lambda_m}{\mu} G(1, \lambda, \alpha)} \label{Geq}
\end{align}
as long as $\frac{\lambda_m}{\mu} G(1, \lambda, \alpha) < 1$. Next we compute $G(1, \lambda, \alpha)$.

\begin{theorem}\label{MGF}
Assume $\lambda < \mu$. For $\alpha \leq \left(\sqrt{\mu} - \sqrt{\lambda} \right)^2$,
\begin{align}
G(1, \lambda, \alpha) &= \mathbb{E} \left[e^{\alpha  \tau_{0}(1, \lambda)} \right] =\frac{1}{2 \lambda} \left(\lambda + \mu - \alpha - \sqrt{\left(\lambda + \mu - \alpha \right)^2 - 4 \lambda \mu} \right) \label{eqMGF}
\end{align}
\end{theorem}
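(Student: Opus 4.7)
The approach is to compute $\phi(\alpha) := G(1,\lambda,\alpha) = \mathbb{E}[e^{\alpha \tau_0(1,\lambda)}]$ by first-step analysis, exploiting the fact that the M/M/1 queue length is a skip-free birth-death chain. From state $1$ the process waits an exponential time $T$ of rate $\lambda+\mu$ (MGF equal to $(\lambda+\mu)/(\lambda+\mu-\alpha)$ for $\alpha<\lambda+\mu$), and then with probability $\mu/(\lambda+\mu)$ jumps to $0$ and stops, while with probability $\lambda/(\lambda+\mu)$ it jumps to state $2$. Because the chain is skip-free and time-homogeneous, the subsequent first-passage time from $2$ to $0$ decomposes, via the strong Markov property, as the sum of two i.i.d.\ copies of $\tau_0(1,\lambda)$ (first descend from $2$ to $1$, then from $1$ to $0$).

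Combining these ingredients and using independence of $T$ from the post-jump evolution yields the functional equation
\begin{equation*}
\phi(\alpha) = \frac{\lambda+\mu}{\lambda+\mu-\alpha}\left(\frac{\mu}{\lambda+\mu} + \frac{\lambda}{\lambda+\mu}\phi(\alpha)^2\right),
\end{equation*}
equivalently the quadratic $\lambda\phi(\alpha)^2 - (\lambda+\mu-\alpha)\phi(\alpha) + \mu = 0$. Solving produces the two candidate branches
\begin{equation*}
\phi_\pm(\alpha) = \frac{1}{2\lambda}\left(\lambda+\mu-\alpha \pm \sqrt{(\lambda+\mu-\alpha)^2 - 4\lambda\mu}\right).
\end{equation*}
I would select the minus branch by evaluating at $\alpha=0$: the discriminant simplifies to $(\mu-\lambda)^2$, so the candidates are $1$ and $\mu/\lambda>1$; since $\lambda<\mu$ implies $\tau_0(1,\lambda)<\infty$ almost surely, $\phi(0)=1$, which pins down the minus sign and reproduces \eqref{eqMGF}.

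The stated restriction on $\alpha$ comes from requiring the radicand to be nonnegative: since $\alpha<\lambda+\mu$ is needed for $T$'s MGF, the condition $(\lambda+\mu-\alpha)^2\geq 4\lambda\mu$ rearranges to $\alpha\leq(\sqrt{\mu}-\sqrt{\lambda})^2$. The main obstacle is that first-step analysis formally manipulates $\phi(\alpha)$ as if it were finite, whereas we are trying to prove exactly that. I would handle this by first running the argument on $\alpha\leq 0$, where $\phi(\alpha)\leq 1$ is automatic, establishing that $\phi\equiv \phi_-$ on $(-\infty,0]$; then I would extend to $\alpha\in(0,(\sqrt{\mu}-\sqrt{\lambda})^2]$ by monotone convergence, using that $\alpha\uparrow\alpha_0$ forces $e^{\alpha\tau_0}\uparrow e^{\alpha_0\tau_0}$ and that $\phi_-$ is continuous and bounded on the whole interval, so the limiting value $\phi(\alpha_0)=\phi_-(\alpha_0)$ is finite and satisfies the quadratic, completing the argument up to the boundary.
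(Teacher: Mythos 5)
Your proposal follows essentially the same route as the paper: condition on whether the first event is an arrival or a departure, use the skip-free decomposition $\tau_0(2,\lambda)\steq\tau_0(1,\lambda)+\tau_0(1,\lambda)'$ to obtain the same quadratic $\lambda\phi^2-(\lambda+\mu-\alpha)\phi+\mu=0$, and then discard the spurious root. The only differences are cosmetic or additive: you select the minus branch via $\phi(0)=\mathbb{P}(\tau_0<\infty)=1$ rather than by matching $\mathbb{E}[\tau_0(1,\lambda)]=1/(\mu-\lambda)$ through differentiation at $\alpha=0$ (both are valid one-line checks), and you explicitly flag the finiteness/circularity issue in the first-step analysis, which the paper does not address at all.
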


\begin{proof}
In order to calculate the MGF, we condition on whether a departure or an arrival happens first. Let $E_A$ be the event that an arrival happens first and let $E_D$ be the event that a departure happens first. Let $\tau(A, \lambda)$ be the time required for the arrival, conditioned on the an arrival happening first; we define $\tau(D, \lambda)$ similarly. Using properties of exponential random variables, we have
\begin{align*}
\mathbb{E} \left[e^{\alpha  \tau_{0}(1, \lambda)} \right]&= \mathbb{E} \left[e^{\alpha \tau_0(1, \lambda)} | E_A\right] \mathbb{P}(E_A) + \mathbb{E} \left[e^{\alpha \tau_0(1, \lambda)} | E_D\right] \mathbb{P}(E_D)\\
&= \mathbb{E} \left[e^{\alpha (\tau_0(2, \lambda) + \tau(A, \lambda))} \right] \frac{\lambda}{\lambda + \mu} + \mathbb{E} \left[e^{\alpha \tau(D, \lambda)}\right] \frac{\mu}{\lambda + \mu}\\
&= \mathbb{E} \left[e^{\alpha \tau_0(1, \lambda)}\right]^2  \mathbb{E} \left[e^{\alpha \tau(A, \lambda)}\right] \frac{\lambda}{\lambda + \mu} + \mathbb{E} \left[e^{\alpha \tau(D, \lambda)}\right] \frac{\mu}{\lambda + \mu}
\end{align*}
Now since $\tau(A, \lambda) \dist \tau(D, \lambda) \sim exp(\lambda + \mu)$, 
$$\mathbb{E} \left[e^{\alpha \tau(A, \lambda)}\right] = \mathbb{E} \left[e^{\alpha \tau(D, \lambda)}\right] = \frac{\lambda + \mu}{\lambda + \mu - \alpha}$$ so long as $\alpha < \lambda + \mu$. In fact, this is the case: $\alpha \leq \left( \sqrt{\mu} - \sqrt{\lambda} \right)^2 = \mu + \lambda - 2 \sqrt{\lambda \mu} < \lambda + \mu$.
Now in order to find $\mathbb{E} \left[e^{\alpha \tau_0(1, \lambda)} \right] $ we solve the resulting quadratic to obtain
\begin{align}
\mathbb{E} \left[e^{\alpha \tau_0(1, \lambda)} \right] = \frac{1}{2\lambda} \left(\lambda + \mu - \alpha \pm \sqrt{(\lambda + \mu - \alpha)^2 - 4 \lambda \mu} \right) \label{G1}
\end{align}
In order for the discriminant to be nonnegative, we need $\alpha \leq \left(\sqrt{\mu} - \sqrt{\lambda} \right)^2$ or $\alpha \geq \left(\sqrt{\mu} + \sqrt{\lambda}\right)^2$. However, the second condition is overruled by the condition $\alpha < \lambda + \mu$.
To identify the correct root, we use the differentiation property of moment generating functions:
\begin{align*}
\mathbb{E} \left[ \tau_0(1, \lambda) \right] = \frac{d}{d \alpha} \mathbb{E}\left[ e^{\alpha \tau_0(1, \lambda)} \right] \bigg\rvert_{\alpha = 0}
\end{align*}
Again conditioning on whether an arrival or departure happens first, we have
\begin{align*}
\mathbb{E} \left[ \tau_0(1, \lambda) \right] &= \left(\mathbb{E} \left[ \tau_0(2, \lambda) \right] + \frac{1}{\lambda + \mu} \right) \frac{\lambda}{\lambda + \mu} + \left( \frac{1}{\lambda + \mu} \right) \frac{\mu}{\lambda + \mu}\\
&=\mathbb{E} \left[ \tau_0(1, \lambda) \right] = \left(2\mathbb{E} \left[ \tau_0(1, \lambda) \right] + \frac{1}{\lambda + \mu} \right) \frac{\lambda}{\lambda + \mu} + \left( \frac{1}{\lambda + \mu} \right) \frac{\mu}{\lambda + \mu}\\
\implies \mathbb{E} \left[ \tau_0(1, \lambda) \right] &= \frac{1}{\mu - \lambda}
\end{align*}
The $+$ root of Equation \eqref{G1} gives 
$\frac{d}{d \alpha} \mathbb{E}\left[ e^{\alpha \tau_0(1, \lambda)} \right] \bigg\rvert_{\alpha = 0} = \frac{\mu}{\lambda (\lambda - \mu)} < 0$
and the $-$ root gives
$\frac{d}{d \alpha} \mathbb{E}\left[ e^{\alpha \tau_0(1, \lambda)} \right] \bigg\rvert_{\alpha = 0} = \frac{1}{\mu - \lambda} =  \mathbb{E} \left[ \tau_0(1, \lambda) \right].$
This concludes the proof.
\end{proof}
\begin{remark}
After proving Theorem \ref{MGF}, we came to know of an alternate proof in \cite{Prabhu1998}, pp. 92-95.
\end{remark}
Now we apply Theorem \ref{extension2.1} to the convergence of M/M/1 queue with arrival rate perturbed from $r_0 = \lambda_0$ to $r = \lambda$, using Theorem \ref{MGF}. There are two cases:\\
\noindent \textbf{Case 1:} $\lambda_m = \lambda \geq \lambda_0$\\
Set  $\alpha = \left(\sqrt{\mu} - \sqrt{\lambda} \right)^2$ in Equation \eqref{eqMGF} to obtain
\begin{align*}
G(1, \lambda, \alpha) = \sqrt{\frac{\mu}{\lambda}}.
\end{align*}
To substitute into Equation \eqref{Geq}, we need to verify that $\frac{\lambda_m}{\mu} G(1, \lambda, \alpha) < 1$.
$$\frac{\lambda_m}{\mu} G(1, \lambda, \alpha) = \frac{\lambda}{\mu} \sqrt{\frac{\mu}{\lambda}}
=  \sqrt{\frac{\lambda}{\mu}}<1.$$
Thus, we obtain 
$$G(\lambda_m, \lambda, \alpha)=  \frac{1 - \frac{\lambda}{\mu} }{1 - \sqrt{\frac{\lambda}{\mu}}}= 1 + \sqrt{\frac{\lambda}{\mu}}$$
and
\begin{align*}
\left \Vert \mathcal{L}\left(X_t(\lambda_0, \lambda)\right) - \pi(\lambda) \right \Vert_{\text{TV}}&\leq \left(1 + \sqrt{\frac{\lambda}{\mu}} \right) e^{-\left(\sqrt{\mu} - \sqrt{\lambda} \right)^2 t}.
\end{align*}

\noindent \textbf{Case 2:} $\lambda_m = \lambda_0 > \lambda$\\
We need to pick $\alpha$ for which 1) $\frac{\lambda_0}{\mu} G(1, \lambda, \alpha)< 1$ and 2) $\alpha \leq \left(\sqrt{\mu} - \sqrt{\lambda} \right)^2$.
Condition 1) is equivalent to
\begin{align*}
&\frac{\lambda_0}{\mu} \left(\frac{1}{2 \lambda} \left(\lambda + \mu - \alpha - \sqrt{\left(\lambda + \mu - \alpha \right)^2 - 4 \lambda \mu} \right) \right) < 1\\
&\sqrt{\left(\lambda + \mu - \alpha \right)^2 - 4 \lambda \mu}  >- \frac{2 \lambda \mu}{\lambda_0} + \lambda + \mu - \alpha
\end{align*}
To determine when Condition 1) holds, we set these quantities equal to each other. 
\begin{align*}
&\sqrt{\left(\lambda + \mu - \alpha \right)^2 - 4 \lambda \mu}  =- \frac{2 \lambda \mu}{\lambda_0} + \lambda + \mu - \alpha\\
&\left(\lambda + \mu - \alpha \right)^2 - 4 \lambda \mu  = \left(- \frac{2 \lambda \mu}{\lambda_0} + \lambda + \mu - \alpha \right)^2\\
&\alpha = \lambda + \mu - \lambda_0 - \frac{\lambda \mu}{\lambda_0}
\end{align*}
Squaring may have introduced additional solutions. With this value of $\alpha$, the left side is equal to
\begin{align*}
\sqrt{\left(\lambda + \mu - \alpha \right)^2 - 4 \lambda \mu}  &= \sqrt{\left(\lambda_0 + \frac{\lambda \mu}{\lambda_0} \right)^2 - 4 \lambda \mu}  \\
&= \sqrt{\left(\lambda_0 - \frac{\lambda \mu}{\lambda_0} \right)^2}\\
&= \left|\lambda_0 - \frac{\lambda \mu}{\lambda_0} \right|
\end{align*}
The right side is equal to $\lambda_0 - \frac{\lambda \mu}{\lambda_0}$. If $\lambda_0 > \sqrt{\lambda \mu}$ there is a solution, otherwise there is no solution. Setting $\alpha = 0$, the left side is equal to $\mu - \lambda$, while the right side is less than $\mu - \lambda$ (setting $\lambda_0 = \mu - \epsilon$). Therefore when $\lambda_0 > \sqrt{\lambda \mu}$, we pick $\alpha < \lambda + \mu - \lambda_0 - \frac{\lambda \mu}{\lambda_0}$. We verify that $\lambda + \mu - \lambda_0 - \frac{\lambda \mu}{\lambda_0} \leq \left(\sqrt{\mu} - \sqrt{\lambda} \right)^2$.  Otherwise, when $\lambda_0 \leq \sqrt{\lambda \mu}$, we are free to pick $\alpha = \left(\sqrt{\mu} - \sqrt{\lambda} \right)^2$. 

%

Therefore Theorem \ref{extension2.1} is satisfied by substituting either $\alpha = \lambda + \mu - \lambda_0 - \frac{\lambda \mu}{\lambda_0} - \epsilon$ or $\alpha = \left( \sqrt{\mu} - \sqrt{\lambda} \right)^2$, depending on the value of $\lambda_0$. Intuitively, large values of $\lambda_0$ correspond to more ``contraction'' when the system goes to equilibrium, and therefore the convergence rate $\alpha$ should be smaller. 


\begin{remark}
The function
$$f(\lambda_0) = \begin{cases}
\left(\sqrt{\mu} - \sqrt{\lambda} \right)^2 &\text { if } \lambda_0 \leq \sqrt{\lambda \mu}\\
\lambda + \mu - \lambda_0 - \frac{\lambda \mu}{\lambda_0}  &\text{ if } \lambda_0 > \sqrt{\lambda \mu}
\end{cases} $$
is continuous in $\lambda_0$. In other words, the convergence rate changes continuously in $\lambda_0$.
\end{remark}

\begin{remark}
The rate $\alpha^{\star} = \left( \sqrt{\mu} - \sqrt{\lambda} \right)^2$ is well-known as the best convergence rate for the M/M/1 queue length process starting in a fixed initial condition (see e.g. \cite{VanDoorn1985} in addition to \cite{Lund1996}). However, it is not immediately clear that the same result would hold in our setting where the initial state of the queue has a distribution, 
$$\left \Vert \mathcal{L} \left(X_t(\lambda_0, \lambda) \right) - \pi(\lambda) \right \Vert_{\text{TV}} \ngeq \mathbb{E}_{X \sim \lambda} \left[\left \Vert \mathcal{L} \left(X_t(\lambda) | X_t = X \right) - \pi(\lambda) \right \Vert_{\text{TV}} \right].$$
In other words, we cannot simply go from quenched to annealed convergence.
\end{remark}

In the Appendix, we show another technique that gives a convergence rate of $$\overline{\alpha} = \frac{\log{\frac{\mu}{\lambda_0}}}{\log \sqrt{\frac{\mu}{\lambda}}} \left( \sqrt{\mu} - \sqrt{\lambda} \right)^2$$ when $\lambda_0 > \sqrt{\lambda \mu}$. Therefore, the best known convergence rate in the $\lambda_0 > \sqrt{\lambda \mu}$ case is 
$$\max \left \{ \lambda + \mu - \lambda_0 - \frac{\lambda \mu}{\lambda_0}, \frac{\log{\frac{\mu}{\lambda_0}}}{\log \sqrt{\frac{\mu}{\lambda}}} \left( \sqrt{\mu} - \sqrt{\lambda} \right)^2\right \}.$$

\vspace{12pt}
We now consider a more general perturbation. Suppose the parameters of the M/M/1 queue change from $(\lambda_0, \mu_0)$ to $(\lambda, \mu)$. We can relate these parameters by 
$ab \lambda_0 = \lambda$ and $b \mu_0= \mu$.
Interpreting multiplication by $b$ as rescaling time by a factor of $b$, we can write
\begin{align*}
&\left \Vert \mathcal{L} \left(X_t((\mu_0, \lambda_0), (b \mu_0, ab \lambda_0) \right) - \pi\left( (b \mu_0, ab \lambda_0) \right) \right \Vert_{\text{TV}} \\
&=\left \Vert \mathcal{L} \left(X_{bt}((\mu_0, \lambda_0), (\mu_0, a \lambda_0) \right) - \pi\left( (\mu_0, a \lambda_0) \right) \right \Vert_{\text{TV}}.
\end{align*}
We then conclude
\begin{align*}
&\left \Vert \mathcal{L} \left(X_t((\mu_0, \lambda_0), (\mu, \lambda) \right) - \pi\left( (\mu, \lambda) \right) \right \Vert_{\text{TV}}\leq G\left( (\mu_0, \lambda_0), (\mu_0, \lambda_m), \alpha \right) e^{-\alpha bt}
\end{align*}
where $\lambda_m = \max \{\lambda_0, a \lambda_0\}$. Thus, we are left with $G\left( (\mu_0, \lambda_0), (\mu_0, \lambda_m), \alpha \right)$ which is of the same form as Equation \eqref{applythm}, allowing us to apply Theorem \ref{extension2.1} and Theorem \ref{MGF} in order to calculate a bound.

\subsubsection{Workload process.}
Next we consider the workload process, $\{W_t \}$, for an M/M/1 queue. The value $W_t \in \mathbb{R}_{\geq 0}$ is the time remaining until the queue is empty, starting from time $t$. As for the queue length process, we consider changing the arrival rate from $\lambda_0$ to $\lambda$ while keeping the service rate fixed at $\mu_0$. The process $\{W_t \}$ is stochastically increasing in $\lambda$. Applying  Theorem \ref{extension2.1}, we need to calculate $G_W(\lambda_m, \lambda, \alpha)$ for the process $\{W_t \}$. But $\{W_t = 0 \} = \{X_t = 0\}$ since the workload is zero if and only if the queue length is zero. Therefore $G_W(\lambda_m, \lambda, \alpha)= G_X(\lambda_m, \lambda, \alpha)$, and the same convergence results follow.

In \cite{Lund1996}, it is shown that $\alpha^{\star} = \left( \sqrt{\mu} - \sqrt{\lambda} \right)^2$ is the best possible convergence rate for the M/M/1 workload process beginning with initial condition $W_0 = 0$. Precisely, \cite{Lund1996} show that if $\alpha > \alpha^{\star}$ and $W_0 = 0$, 
$$\limsup_{t \to \infty} e^{\alpha t} \left \Vert \mathcal{L}(W_t) - \pi \right \Vert_{\text{TV}} = \infty.$$

We investigate whether a similar property holds when $W_0$ is distributed according to the parameters $(\mu_0, \lambda_0)$. When $\lambda_0 \leq \sqrt{\lambda \mu}$, it turns out that $\alpha^{\star} = \left( \sqrt{\mu} - \sqrt{\lambda} \right)^2$ is in fact the best rate. We use the bounding process idea again with $W_t(\lambda_0, \lambda)$ and $W_t(\lambda, \lambda)$,  which is analogous to the proof of Theorem 2.3 in \cite{Lund1996}. Let $T = \inf_t \{t : W_t(\lambda_0, \lambda) = W_t(\lambda, \lambda)\}$.

\begin{align*}
 &\left \Vert \mathcal{L}(W_t(\lambda_0, \lambda)) - \pi(\lambda) \right \Vert_{\text{TV}} \\
 &= \sup_A \left| \mathbb{P} \left(W_t(\lambda_0, \lambda) \in A \right) - \pi(A ; \lambda) \right|\\
 &\geq \left| \mathbb{P} \left(W_t(\lambda_0, \lambda) = 0 \right) - \pi(0; \lambda) \right|\\
 &= \mathbb{P}\left(W_t(\min\{\lambda_0, \lambda\}, \lambda) = 0, T > t \right)\\
 &\geq \mathbb{P}\left(W_t(\min\{\lambda_0, \lambda\}, \lambda) = 0, T > t  | W_0(\min\{\lambda_0, \lambda\}, \lambda) = 0\right) \mathbb{P} \left(W(\min\{\lambda_0, \lambda\}, \lambda) = 0 \right)\\
 &= \mathbb{P}\left(W_t(\lambda) = 0, T > t  | W_0(\lambda) = 0\right) \left( 1 - \frac{\lambda_0}{\mu}\right)
 \end{align*}

It is shown in the proof of Theorem 2.3 in \cite{Lund1996} that for $\alpha > \left( \sqrt{\mu} - \sqrt{\lambda}\right)^2$, 
$$\limsup_{t \to \infty} e^{\alpha t} \mathbb{P}\left(W_t(\lambda) = 0, T > t  | W_0(\lambda) = 0\right) = \infty.$$
Multiplying the left side by the constant $\left( 1 - \frac{\lambda_0}{\mu}\right)$, 
$$\limsup_{t \to \infty} e^{\alpha t} \mathbb{P}\left(W_t(\lambda) = 0, T > t  | W_0(\lambda) = 0\right) \left( 1 - \frac{\lambda_0}{\mu}\right) = \infty,$$ and we conclude that 
$$\limsup_{t \to \infty} e^{\alpha t}  \left \Vert \mathcal{L}(W_t(\lambda_0, \lambda)) - \pi(\lambda) \right \Vert_{\text{TV}} = \infty $$ when $\alpha > \left( \sqrt{\mu} - \sqrt{\lambda} \right)^2$.

When $\lambda_0 \geq \sqrt{\lambda \mu}$, we have a gap between the best known rate 
$$\alpha = \max \left \{ \frac{\log{\frac{\mu}{\lambda_0}}}{\log \sqrt{\frac{\mu}{\lambda}}} \left( \sqrt{\mu} - \sqrt{\lambda} \right)^2  , \lambda + \mu - \lambda_0 - \frac{\lambda \mu}{\lambda_0} \right\}$$ and the upper bound on the rate, $\alpha^{\star} = \left( \sqrt{\mu} - \sqrt{\lambda} \right)^2$. 

\section{Conclusion}
In this paper we presented a method for finding exponential convergence rates for stochastically ordered Markov processes with a random initial condition. This method of analysis is useful for perturbation analysis of Markov processes, such as various queueing systems. Furthermore, we provide an explicit exponential bound for convergence in total variation distance of an M/M/1 queue that begins in an equilibrium distribution. The method developed in this paper can certainly be applied to other systems, such as M/G/1 queues, as long as one can identify the domain of the moment generating function of the hitting time to the zero state. 

\appendix
\section*{Appendix}
Using a truncation technique, we can improve the convergence of the M/M/1 queue-length process (and therefore the workload process as well) in the case $\lambda_0 > \sqrt{\lambda \mu}$.
\begin{theorem}
There exists a computable $C$ such that 
$$\left \Vert \mathcal{L}(X_t(\lambda_0, \lambda)) - \pi(\lambda) \right \Vert_{TV} \leq C e^{-\overline{\alpha} t}$$
where $$\overline{\alpha} = \frac{\log{\frac{\mu}{\lambda_0}}}{\log \sqrt{\frac{\mu}{\lambda}}} \left( \sqrt{\mu} - \sqrt{\lambda} \right)^2.$$
\end{theorem}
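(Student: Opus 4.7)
The plan is to write $\mathcal{L}(X_t(\lambda_0,\lambda)) = \sum_{x=0}^{\infty} p_x\, \mathcal{L}(X_t(x,\lambda))$ by conditioning on the (random) initial state, where $p_x = (1 - \lambda_0/\mu)(\lambda_0/\mu)^x$ is the geometric stationary law $\pi(\lambda_0)$, and then to apply the \emph{original} fixed-initial-state bound of Theorem \ref{LundThm2.1} at the ideal rate $\alpha^\star = (\sqrt{\mu}-\sqrt{\lambda})^2$ to each term with $x \leq N$, bounding the tail $x > N$ trivially by $\mathbb{P}(X_0 > N)$. Choosing the truncation level $N$ to grow linearly in $t$ and balancing the two contributions will produce the claimed rate $\overline{\alpha}$.

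Concretely, by the triangle inequality for total variation,
\begin{align*}
\bigl\Vert \mathcal{L}(X_t(\lambda_0,\lambda)) - \pi(\lambda) \bigr\Vert_{TV} \;\leq\; \sum_{x=0}^{N} p_x \bigl\Vert \mathcal{L}(X_t(x,\lambda)) - \pi(\lambda) \bigr\Vert_{TV} \;+\; \sum_{x > N} p_x.
\end{align*}
Theorem \ref{LundThm2.1} at $\alpha = \alpha^\star$ bounds the $x$-th summand by $\bigl(G(x,\lambda,\alpha^\star) + G(\pi(\lambda),\lambda,\alpha^\star)\bigr) e^{-\alpha^\star t}$. Theorem \ref{MGF} gives $G(1,\lambda,\alpha^\star) = \sqrt{\mu/\lambda}$, hence by the strong Markov property $G(x,\lambda,\alpha^\star) = (\mu/\lambda)^{x/2}$; setting $\lambda_m = \lambda$ in Equation \eqref{Geq} gives the finite constant $G(\pi(\lambda),\lambda,\alpha^\star) = 1 + \sqrt{\lambda/\mu}$. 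Substituting, the truncated sum is a geometric series with common ratio $\rho := (\lambda_0/\mu)\sqrt{\mu/\lambda} = \lambda_0/\sqrt{\lambda\mu}$, which exceeds $1$ exactly in the regime $\lambda_0 > \sqrt{\lambda\mu}$ of interest, and so up to constants depending only on $\lambda,\lambda_0,\mu$ the first piece is at most $C_1 \rho^N e^{-\alpha^\star t}$, while the tail equals $(\lambda_0/\mu)^{N+1}$.

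Finally I would take $N = \lfloor c t \rfloor$ with $c = \alpha^\star / \log\sqrt{\mu/\lambda}$. A direct computation shows $c \log\rho - \alpha^\star = -c\log(\mu/\lambda_0) = -\overline{\alpha}$, so the truncated contribution decays as $e^{-\overline{\alpha} t}$, and simultaneously the tail satisfies $(\lambda_0/\mu)^{ct} = e^{-\overline{\alpha} t}$. Combining yields the bound $C e^{-\overline{\alpha} t}$ with $C$ explicitly computable from $\lambda,\lambda_0,\mu$.

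The main obstacle is precisely the reason the previous argument failed: when $\lambda_0 > \sqrt{\lambda\mu}$ one has $\rho > 1$, so summing Theorem \ref{LundThm2.1} against the geometric initial law diverges. The truncation buys convergence at the cost of a growing factor $\rho^N$, which must be beaten by $e^{-\alpha^\star t}$, while $N$ must simultaneously grow fast enough for the tail $(\lambda_0/\mu)^N$ to shrink. These two constraints admit a unique balance point, and the resulting rate $\overline{\alpha}$ is strictly smaller than $\alpha^\star$ — the gap is the price paid for handling the heavy initial tail by truncation rather than by a direct moment-generating-function estimate.
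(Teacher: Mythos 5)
Your proposal is correct and follows essentially the same route as the paper's proof: truncate the geometric initial law $\pi(\lambda_0)$ at a level $N$ growing linearly in $t$, apply the fixed-initial-state bound of Theorem \ref{LundThm2.1} at $\alpha^{\star}$ term by term (yielding the geometric series with ratio $\lambda_0/\sqrt{\lambda\mu}$), bound the tail by $(\lambda_0/\mu)^{N+1}$, and balance the two contributions. The paper parametrizes the truncation by $\epsilon = e^{-\overline{\alpha}t}$ and solves for $N(\epsilon)$ whereas you set $N = \lfloor ct \rfloor$ directly, but the optimization and the resulting rate $\overline{\alpha}$ are identical.
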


\begin{proof}
\begin{align*}
\left \Vert \mathcal{L}(X_t(\lambda_0, \lambda)) - \pi(\lambda) \right \Vert_{TV} &= \sup_A \left |\mathbb{P} \left( X_t(\lambda_0, \lambda) \in A \right) - \pi(A; \lambda) \right|\\
&= \sup_A \mathbb{P} \left( X_t(\lambda_0, \lambda) \in A \right) - \pi(A; \lambda) \\
&= \sup_A \sum_{x=0}^{\infty} \mathbb{P} \left( X_t(\lambda) \in A | X_0 = x \right)\pi(x; \lambda_0) - \pi(A; \lambda) \\
&= \sup_A \sum_{x=0}^{\infty} \pi(x; \lambda_0) \left[\mathbb{P} \left( X_t(\lambda) \in A | X_0 = x \right) - \pi(A; \lambda) \right]
\end{align*}
We now truncate $\pi(\lambda_0)$. Let $N(\epsilon) = \min \{ N: \sum_{x = N+1}^{\infty} \pi(x; \lambda_0) \leq \epsilon\}$.
Continuing, 
\begin{align}
&\leq \sup_A \sum_{x=0}^{N(\epsilon)}\left[ \pi(x; \lambda_0) \left(\mathbb{P} \left( X_t(\lambda) \in A | X_0 = x \right) - \pi(A; \lambda) \right)\right] + \epsilon \label{loose1}\\
&\leq \sum_{x=0}^{N(\epsilon)}\left[ \pi(x; \lambda_0) \sup_A  \left(\mathbb{P} \left( X_t(\lambda) \in A | X_0 = x \right) - \pi(A; \lambda) \right) \right]+ \epsilon \label{loose2}
\end{align}
Let $\alpha^{\star} = \left( \sqrt{\mu} - \sqrt{\lambda} \right)^2$. Applying Theorem 2.1 from \cite{Lund1996}, we can write
\begin{align}
&\leq \sum_{x=0}^{N(\epsilon)} \left[ \pi(x; \lambda_0) \left(G(x, \lambda, \alpha) + G(\lambda, \lambda, \alpha^{\star}) \right)e^{-\alpha^{\star}t} \right] + \epsilon \nonumber \\
&\leq (1- \epsilon) \left(1 + \sqrt{\frac{\lambda}{\mu}}\right) e^{-\alpha^{\star}t} + \epsilon + \sum_{x=0}^{N(\epsilon)} \left[ \pi(x; \lambda_0) G(x, \lambda, \alpha) e^{-\alpha^{\star}t} \right] \nonumber \\
&= (1- \epsilon) \left(1 + \sqrt{\frac{\lambda}{\mu}}\right) e^{-\alpha^{\star}t} + \epsilon + \sum_{x=0}^{N(\epsilon)} \left[ \pi(x; \lambda_0) \left(G(1, \lambda, \alpha) \right)^x e^{-\alpha^{\star}t} \right] \nonumber \\
&= (1- \epsilon) \left(1 + \sqrt{\frac{\lambda}{\mu}}\right) e^{-\alpha^{\star}t} + \epsilon + \sum_{x=0}^{N(\epsilon)} \left[ \left(1 - \frac{\lambda_0}{\mu}\right) \left(\frac{\lambda_0}{\mu} \right)^x \left(\sqrt{\frac{\mu}{\lambda}}\right)^x e^{-\alpha^{\star}t} \right] \nonumber \\
&= (1- \epsilon) \left(1 + \sqrt{\frac{\lambda}{\mu}}\right) e^{-\alpha^{\star}t} + \epsilon +  \left(1 - \frac{\lambda_0}{\mu}\right) e^{-\alpha^{\star}t}\sum_{x=0}^{N(\epsilon)}  \left(\frac{\lambda_0}{\sqrt{\lambda \mu}} \right)^x  \nonumber \\
&= (1- \epsilon) \left(1 + \sqrt{\frac{\lambda}{\mu}}\right) e^{-\alpha^{\star}t} + \epsilon +  \left(1 - \frac{\lambda_0}{\mu}\right)  \left(\frac{\left(\frac{\lambda_0}{\sqrt{\lambda \mu}} \right)^{N(\epsilon)+1} - 1}{\frac{\lambda_0}{\sqrt{\lambda \mu}}-1 } \right)e^{-\alpha^{\star}t}  \label{bound}
\end{align}
Set $\epsilon = e^{-\overline{\alpha}t}$ in order to fold in the $\epsilon$ term into a convergence bound. Then $N(\epsilon)$ must satisfy
\begin{align*}
\left(1-\frac{\lambda_0}{\mu}\right)\sum_{x=0}^{N(\epsilon)}  \left(\frac{\lambda_0}{\mu} \right)^x  &\geq 1-e^{-\overline{\alpha} t}\\
\left(1 - \frac{\lambda_0}{\mu} \right) \left( \frac{1- \left( \frac{\lambda_0}{\mu} \right)^{N(\epsilon)+1} }{1-\frac{\lambda_0}{\mu}} \right) &\geq 1 - e^{-\overline{\alpha}t}\\
\left( \frac{\lambda_0}{\mu} \right)^{N(\epsilon)+1} &\leq e^{-\overline{\alpha}t}\\
N(\epsilon) &\geq \frac{1}{\log \frac{\mu}{\lambda_0}} \overline{\alpha}t -1
\end{align*}
Substituting the value  $N(\epsilon) = \frac{1}{\log \frac{\mu}{\lambda_0}} \overline{\alpha}t \geq \left \lceil \frac{1}{\log \frac{\mu}{\lambda_0}} \overline{\alpha}t -1 \right \rceil$ back into the bound \eqref{bound}, the last term in the bound becomes
\begin{align*}
\left(1 - \frac{\lambda_0}{\mu}\right)  \left(\frac{\left(\frac{\lambda_0}{\sqrt{\lambda \mu}} \right)^{\frac{1}{\log \frac{\mu}{\lambda_0}} \overline{\alpha}t + 1} - 1}{\frac{\lambda_0}{\sqrt{\lambda \mu}}-1 } \right)e^{-\alpha^{\star}t} &= \left(1 - \frac{\lambda_0}{\mu}\right)  \left(\frac{ \frac{\lambda_0}{\sqrt{\lambda \mu}}e^{\log \left(\frac{\lambda_0}{\sqrt{\lambda \mu}} \right)\frac{1}{\log \frac{\mu}{\lambda_0}} \overline{\alpha}t} - 1}{\frac{\lambda_0}{\sqrt{\lambda \mu}}-1 } \right)e^{-\alpha^{\star}t} 
\end{align*}
If $\log \left(\frac{\lambda_0}{\sqrt{\lambda \mu}} \right)\frac{1}{\log \frac{\mu}{\lambda_0}} \overline{\alpha} < \alpha^{\star}$, then we get convergence at rate $$\min \left\{\overline{\alpha}, \alpha^{\star} - \frac{\log \left(\frac{\lambda_0}{\sqrt{\lambda \mu}} \right)}{\log \frac{\mu}{\lambda_0}} \overline{\alpha} \right \}$$.

Let $\overline{\alpha} = c \alpha^{\star}$ with $c <  \frac{\log \frac{\mu}{\lambda_0}} {\log \left( \frac{\lambda_0}{\sqrt{\lambda \mu}} \right)}$. Then we seek to maximize
$$\min \left \{c \alpha^{\star}, \alpha^{\star} - \frac{\log \left( \frac{\lambda_0}{\sqrt{\lambda \mu}} \right)}{\log \frac{\mu}{\lambda_0}} c \alpha^{\star}\right\}$$ over $c$. When $\lambda_0 > \sqrt{\lambda \mu}$ the factor $\frac{\log \left( \frac{\lambda_0}{\sqrt{\lambda \mu}} \right)}{\log \frac{\mu}{\lambda_0}}$ is positive, and the optimal $c$ is found by setting the two quantities equal to each other, leading to $c = \frac{\log{\frac{\mu}{\lambda_0}}}{\log \sqrt{\frac{\mu}{\lambda}}}$.
We verify that this value is less than $\frac{\log \frac{\mu}{\lambda_0}} {\log \left( \frac{\lambda_0}{\sqrt{\lambda \mu}} \right)}$.
Therefore the best rate obtained by this method is 
$$\overline{\alpha} = \frac{\log{\frac{\mu}{\lambda_0}}}{\log \sqrt{\frac{\mu}{\lambda}}} \left( \sqrt{\mu} - \sqrt{\lambda} \right)^2.$$
\end{proof}

\begin{remark}
The function
$$g(\lambda_0) = \begin{cases}
\left( \sqrt{\mu} - \sqrt{\lambda} \right)^2 & \text{ if } \lambda_0 \leq \sqrt{\lambda \mu}\\
\frac{\log{\frac{\mu}{\lambda_0}}}{\log \sqrt{\frac{\mu}{\lambda}}} \left( \sqrt{\mu} - \sqrt{\lambda} \right)^2 & \text{ if } \lambda_0 > \sqrt{\lambda \mu}
\end{cases}
$$
is continuous. In other words, the convergence rate changes continuously in $\lambda_0$.
\end{remark}

For certain values of $(\lambda_0, \lambda, \mu)$ this rate is better than the rate previously computed, $\alpha = \lambda + \mu - \lambda_0 - \frac{\lambda \mu}{\lambda_0}$. However, $\overline{\alpha} < \alpha^{\star}$ when $\lambda_0 > \sqrt{\lambda \mu}$, so there is still a gap, and we do not know the best convergence rate in this case. We suspect that the rate $\overline{\alpha}$ not the best possible, since the step from expression \eqref{loose1} to expression \eqref{loose2}, which exchanges the order of a supremum with a sum, can be quite loose.

\subsection*{Acknowledgements}
This work was partially supported by the Singapore National Research Foundation through the Singapore-MIT Alliance for Research and Technology (SMART) Centre for Future Urban Mobility (FM). Julia Gaudio is supported by a Microsoft Research PhD fellowship.
\small
\bibliographystyle{plain}
\bibliography{MIT_Project_1}

\begin{thebibliography}{10}

\bibitem{Baxendale2005}
P.~H. Baxendale.
\newblock Renewal theory and computable convergence rates for geometrically
  ergodic {Markov} chains.
\newblock {\em The Annals of Applied Probability}, 15(1B):700--738, 2005.

\bibitem{VanDoorn1985}
E.~A.~Van Doorn.
\newblock Conditions for exponential ergodicity and bounds for the decay
  parameter of a birth-death process.
\newblock {\em Advances in Applied Probability}, 17(3), 1985.

\bibitem{Douc2004}
R.~Douc, E.~Moulines, and J.~S. Rosenthal.
\newblock Quantitative bounds for geometric convergence rates of {Markov}
  chains.
\newblock {\em Annals of Applied Probability}, 14(4):1643--1665, 2004.

\bibitem{Douc2007}
R.~Douc, E.~Moulines, and P.~Soulier.
\newblock Computable convergence rates for subgeometrically ergodic {Markov}
  chains.
\newblock {\em Bernoulli}, 13(3):831--848, 2007.

\bibitem{Guo2007}
X.~Guo.
\newblock Continuous-time {Markov} decision processes with discounted rewards:
  the case of {Polish} spaces.
\newblock {\em Mathematics of Operations Research}, 32(1):73--87, 2007.

\bibitem{Guo2010}
X.~Guo and L.~Ye.
\newblock New discount and average optimality conditions for continuous-time
  {Markov} decision processes.
\newblock {\em Advances in Applied Probability}, 42(4):953--985, 2010.

\bibitem{Hou2005}
Z.~Hou, Y.~Liu, and H.~Zhang.
\newblock Subgeometric rates of convergence for a class of continuous-time
  {Markov} process.
\newblock {\em Journal of Applied Probability}, 42(3):698--712, 2005.

\bibitem{Kamae1977}
T.~Kamae, U.~Krengel, and G.~L. O'Brien.
\newblock Stochastic inequalities on partially ordered spaces.
\newblock {\em The Annals of Probability}, 5(899-912), 1977.

\bibitem{Kiessler2008}
P.~C. Kiessler and R.~Lund.
\newblock Traffic intensity estimation.
\newblock {\em Naval Research Logistics}, 56(4):385--387, 2008.

\bibitem{Liu2007}
Y.~Liu, H.~Zhang, and Y.~Zhao.
\newblock Computable strongly ergodic rates of convergence for continuous-time
  {Markov} chains.
\newblock {\em ANZIAM Journal}, 49(4):463--478, 2007.

\bibitem{Liu2010}
Y.~Liu, H.~Zhang, and Y.~Zhao.
\newblock Subgeometric ergodicity for continuous-time {Markov} chains.
\newblock {\em Journal of Mathematical Analysis and Applications},
  368:178--189, 2010.

\bibitem{Lund1996}
R.~B. Lund, S.~P. Meyn, and R.~L. Tweedie.
\newblock Computable exponential convergence rates for stochastically ordered
  {Markov} processes.
\newblock {\em The Annals of Applied Probability}, 61(1):218--237, 1996.

\bibitem{Novak2009}
A.~Novak and R.~Watson.
\newblock Determining an adequate probe separation for estimating the arrival
  rate in an {M/D/1} queue using single-packet probing.
\newblock {\em Queueing Systems}, 61:255--272, 2009.

\bibitem{Prabhu1998}
{N.U.} Prabhu.
\newblock {\em Stochastic Storage Processes: Queues, Insurance Risk, Dams, and
  Data Coomunication}.
\newblock Springer-Verlag, 2 edition, 1998.

\bibitem{Prieto-Rumeau2012}
T.~Prieto-Rumeau and O.~Hernandez-Lerma.
\newblock Uniform ergodicity of continuous-time controlled {Markov} chains: A
  survey and new results.
\newblock {\em Annals of Operations Research}, 241:249--293, 2016.

\bibitem{Roberts2017}
G.~O. Roberts and J.~S. Rosenthal.
\newblock Hitting time and convergence rate bound for symmetric {Langevin}
  diffusions.
\newblock {\em Methodology and Computing in Applied Probability}, 2017.

\bibitem{Roberts2000}
G.~O. Roberts and R.~L. Tweedie.
\newblock Rates of convergence of stochastically monotone and continuous time
  {Markov} models.
\newblock {\em Journal of Applied Probability}, 37:359--373, 2000.

\bibitem{Rosenthal2002}
J.~S. Rosenthal.
\newblock Quantitative convergence rates of {Markov} chains: A simple account.
\newblock {\em Electronic Communications in Probability}, 7:123--128, 2002.

\bibitem{Sarantsev2016}
A.~Sarantsev.
\newblock Explicit rates of exponential convergence for reflected
  jump-diffusions on the half-line.
\newblock {\em Latin American Journal of Probability and Mathematical
  Statistics}, 13:1069--1093, 2015.

\bibitem{Scott1996}
D.~J. Scott and R.~L. Tweedie.
\newblock Explicit rates of convergence of stochastically ordered {Markov}
  chains.
\newblock {\em Athens Conference on Applied Probability and Time Series.}, 114,
  1996.

\bibitem{Tweedie1998}
R.~L. Tweedie.
\newblock Truncation approximations of invariant measures for {Markov} chains.
\newblock {\em Journal of Applied Probability}, 35(3):517--536, 1998.

\end{thebibliography}

\end{document}